\newtheorem{theorem}{Theorem}
\newtheorem{lemma}[theorem]{Lemma}
\newcommand{\F}{\mathbb{F}}
\newcommand{\K}{\mathbb{K}}
\newcommand{\ZZ}{\mathbb{Z}}
\newcommand{\Fq}{\mathbb{F}_q}
\newcommand{\e}{\mathbf{e}}
\newcommand{\mand}{ \quad \text{and} \quad}
\def\cA{{\mathcal A}}
\def\cB{{\mathcal B}}
\def\cI{{\mathcal I}}
\def\cM{{\mathcal M}}
\def\cR{{\mathcal R}}
\def\cS{{\mathcal S}}
\def\cT{{\mathcal T}}
\def\cU{{\mathcal U}}
\def\cW{{\mathcal W}}
\newcommand{\D}{\operatorname{\mathrm{D}}}
\title{On divisors of sums of polynomials}
 \author[L.~M{\'e}rai]{L{\'a}szl{\'o} M{\'e}rai}
 \address{Johann Radon Institute for Computational and Applied Mathematics, Austrian Academy of Sciences,  Altenberger Stra\ss e 69, A-4040 Linz, Austria} 
 \email{laszlo.merai@oeaw.ac.at}
\keywords{polynomial, finite field, exponential sums}
\subjclass[2020]{11T06,11T55,11T23,11L07}
\date{\today}
\begin{document}

\maketitle
\begin{abstract}
Let $\cA$ and $\cB$ be sets of polynomials of degree $n$ over a finite field. We show, that if $\cA$ and $\cB$ are large enough, then $A+B$ has an irreducible divisor of large degree for some $A\in\cA$ and $B\in \cB$.  
\end{abstract}

\section{Introduction}

For an integer $k$, let $P(k)$ denote the largest prime divisor of $k$ with the convention $P(0)=0$ and $P(\pm 1)=1$.

For a given set $\cA\subset \{1,\dots, n\}$ of integers,
it is a classical number theoretic question to study 
$$
\{P(a): a\in \cA \},
$$
see \cite{cite:BakerHarman,cite:Balog14,cite:LucaShp05,cite:Ma,cite:SarkozySt86,cite:SarkozySt86b,cite:SarkozySt00,cite:ShpSu07,cite:St05,cite:St01,cite:St04}.

For example, Sárközy and Stewart \cite{cite:SarkozySt86,cite:SarkozySt86b} studied the prime divisors of the elements of sum-sets $\cA+\cB$. They showed that if $\cA,\cB$ are not too small, then there are $a\in\cA$ and $b\in\cB$ such that $P(a+b)$ is large.  
In particular, they showed, that if $\cA,\cB\subset \{1,\dots, n\}$ have positive relative density, i.e. $\#\cA, \#\cB \geq c_1 n$ for some $c_1>0$, then 
$$
\max_{a\in\cA,b\in\cB} P(a+b) \geq c_2 n,
$$
for some positive constant $c_2>0$ which may depend only on $c_1$.

In this paper we investigate this problem for polynomials over finite fields. More precisely, for a prime power $q$, let $\Fq$ denote the finite field of $q$ elements. For a polynomial $A\in \Fq[T]$, let $\D(A)$ denote the maximal degree of irreducible divisors of $A$, that is,
$$
\D(A)=\max_{\substack{P\mid A\\ P \text{ irreducible}}} \deg P,
$$
with the convention, that $\D(a)=0$ for all $a\in\Fq$.

Let $\log$ denote the natural logarithm and for $a\geq 2$ put  $\log_a(x)=\max\{\log(x)/\log(a); 1 \}$.

For a positive integer $n$, let $\cM_n$ denote the set of \emph{monic} polynomials over $\Fq$ of degree~$n$.

We will prove the following result.

\begin{theorem}\label{thm:main}
Let $\alpha, \beta \in \Fq^*$ with
$$
\alpha + \beta =1.
$$

For any $\varepsilon>0$, $n>n_0(\varepsilon)$ and $\cA,\cB \subset \cM_n$ such that
\begin{equation}\label{eq:AB_bound}
\left(\#\cA \#\cB\right)^{1/2} \geq q^{(6/7+\varepsilon)n},
\end{equation}
there exist at least $c_1 \#\cA \#\cB /n $ pairs $(A,B)\in\cA\times\cB$ such that
$$
\D(\alpha A+\beta B)\geq 
n-\log_q \rho - \log _q\log_q \log_q \rho -\frac{c_2}{\log q} -1
$$
for some $c_1,c_2>0$ which  may depend on $\varepsilon$,
where
$$
\rho = \frac{q^n}{\left(\#\cA \#\cB\right)^{1/2} }.
$$

\end{theorem}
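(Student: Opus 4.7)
The plan is to adapt the Fourier-analytic method of S\'ark\"ozy--Stewart for integers to the polynomial setting, working on the $\F_q$-vector space $\cM_n\cong\F_q^n$ obtained by identifying each monic polynomial with its $n$ non-leading coefficients. I will choose a threshold $k$ just below $k^\star:=n-\log_q\rho-\log_q\log_q\log_q\rho-c_2/\log q-1$ and show that the number of pairs $(A,B)\in\cA\times\cB$ with $\alpha A+\beta B$ \emph{not} $k$-smooth is at least $c_1\,\#\cA\,\#\cB/n$. Concretely, let $\cS=\cS_{n,k}$ denote the set of $k$-smooth monic polynomials of degree $n$ (those whose irreducible factors all have degree $\leq k$), and set
$$
N:=\#\{(A,B)\in\cA\times\cB : \alpha A+\beta B\in\cM_n\setminus\cS\}.
$$
Since $\alpha+\beta=1$ keeps $\alpha A+\beta B$ in $\cM_n$, every such pair satisfies $\D(\alpha A+\beta B)>k$, and the hypothesis \eqref{eq:AB_bound} forces $\rho\leq q^{(1/7-\varepsilon)n}$, so $k>n/2$ once $n$ is large.

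Fix a non-trivial additive character $e$ of $\F_q$ and write $\widehat{f}(\xi)=\sum_{x\in\F_q^n}f(x)\,e(\xi\cdot x)$ for the Fourier transform. Fourier inversion gives
$$
N=q^{-n}\sum_{\xi\in\F_q^n}\widehat{\mathbf 1_{\cM_n\setminus\cS}}(\xi)\,\widehat{\mathbf 1_{\cA}}(-\alpha\xi)\,\widehat{\mathbf 1_{\cB}}(-\beta\xi).
$$
The $\xi=0$ contribution is $(1-\Psi(n,k)/q^n)\,\#\cA\,\#\cB$, where $\Psi(n,k)=\#\cS$. Because $k>n/2$, each element of $\cM_n\setminus\cS$ has a \emph{unique} irreducible factor of degree in $(k,n]$, so by the prime polynomial theorem
$$
q^n-\Psi(n,k)=\sum_{k<d\leq n}\pi_q(d)\,q^{n-d}=q^n\sum_{k<d\leq n}\frac{1}{d}\bigl(1+O(q^{-d/2})\bigr)\gg q^n\,\frac{n-k}{n},
$$
and by the choice of $k$ this main term exceeds $2c_1\#\cA\#\cB/n$.

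For the error, $\mathbf 1_{\cM_n}\equiv 1$ on $\F_q^n$ gives $\widehat{\mathbf 1_{\cM_n\setminus\cS}}(\xi)=-\widehat{\mathbf 1_{\cS}}(\xi)$ for $\xi\ne 0$; Cauchy--Schwarz together with Parseval then yield
$$
\Big|q^{-n}\sum_{\xi\ne 0}\widehat{\mathbf 1_{\cS}}(\xi)\,\widehat{\mathbf 1_{\cA}}(-\alpha\xi)\,\widehat{\mathbf 1_{\cB}}(-\beta\xi)\Big|\leq\max_{\xi\ne 0}|\widehat{\mathbf 1_{\cS}}(\xi)|\cdot(\#\cA\#\cB)^{1/2}.
$$
The required bound on $|\widehat{\mathbf 1_\cS}(\xi)|$ comes from the unique factorisation $C=PQ$ of $C\in\cM_n\setminus\cS$ with $P$ irreducible of degree $d\in(k,n]$ and $Q\in\cM_{n-d}$: the inner sum over $Q$ decomposes as a product of one-variable Gauss-type sums on $\F_q$ that vanish unless the Laurent coefficients of $\xi\,P$ modulo $\F_q[T]$ in positions $-1,\dots,-(n-d)$ all vanish. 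Summing the resulting restricted sums over irreducible $P$ and then over $d$ produces the required character-sum estimate.

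The main obstacle is the careful balance between main and error terms. Demanding that the error be at most half the main term leads, after routine optimisation, to the inequality $n-k\gtrsim \log_q\rho+\log_q\log_q\log_q\rho+c/\log q$ matching the threshold of the theorem: the $\log_q\log_q\log_q\rho$ correction emerges from the harmonic sum $\sum_{d>k}1/d$ meeting the $\pi_q(d)/q^d$-diagonal in the character-sum bound, while the $c_2/\log q$ term absorbs the $O(q^{-d/2})$ slack from the prime polynomial theorem. The density threshold $q^{(6/7+\varepsilon)n}$ is precisely what makes this balance possible while still enforcing $k>n/2$, the hypothesis essential to the unique-factorisation argument used throughout.
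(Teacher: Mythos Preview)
Your outline follows the same circle-method skeleton as the paper, but the heart of the argument is missing. Everything hinges on the sentence ``Summing the resulting restricted sums over irreducible $P$ and then over $d$ produces the required character-sum estimate.'' That sentence hides essentially all of the work: in the paper this step occupies Lemma~\ref{lemma:irred_sum} (a Vaughan-identity bound for $\sum_{P\in\cI_m}\e(\xi P)$) together with the two-page Lemma~\ref{lemma:main}, which splits the minor arcs into four regimes $\cT_1,\dots,\cT_4$ according to the Diophantine approximation of $\xi$ and uses, in turn, Vaughan, orthogonality, and equidistribution of irreducibles in residue classes modulo $\cR_{\ell,V}$. None of that machinery appears in your proposal, and there is no indication of how you would obtain a uniform bound on $\widehat{\mathbf 1_{\cS}}(\xi)$ over all $\xi\neq 0$ of the required strength.

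There is also a structural difference that makes your route harder, not easier. The paper does \emph{not} work with the full set of $k$-smooth (or non-$k$-smooth) polynomials; it fixes a single degree and takes $\cS=\{CP:C\in\cM_j,\ P\in\cI_{n-j}\}$. This lets it bound one exponential sum $f_\cS(\xi)\ll (\log j)\,q^{n-j}/n$ on the minor arcs, which directly balances the main term. Your decomposition sums such objects over all $d\in(k,n]$; applying term-by-term bounds, the contributions from $d$ close to $n$ are of order $q^{n-1}/n,\ q^{n-2}/n,\dots$, each individually much larger than the target $\asymp q^{k}/n$ you need, so a triangle-inequality argument cannot close. You would need genuine cancellation across degrees, and you give no mechanism for it.

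Finally, the heuristics you offer for the two delicate constants are not correct. The $\log_q\log_q\log_q\rho$ term does not come from the harmonic sum $\sum_{d>k}1/d$; in the paper it arises from the lower bound $\Phi(V)\gg |V|/\log_q\deg V$ used in the $\cT_3$ case of Lemma~\ref{lemma:main}. And the density threshold $q^{(6/7+\varepsilon)n}$ is not the condition ``$k>n/2$'' (that would only force $(\#\cA\#\cB)^{1/2}>q^{n/2}$); it is exactly the hypothesis $j<n/7-4\log_q n-3/7$ of Lemma~\ref{lemma:main}, which in turn is what makes the Vaughan bound on $\cT_1$ beat the trivial estimate. Until you supply an actual bound on the minor-arc sum and trace these constants through it, the argument is a plan rather than a proof.
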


In particular, if $\#\cA, \#\cB\geq c q^n$ for some $c>0$, then there are polynomials $A\in\cA$, $B\in\cB$ such that 
$$
\D(\alpha A+\beta B)\geq n-c'
$$
for come constant $c'>0$ which may depend only on $c$.

\section{Notations}

For given functions $F$ and $G$, the notations $F\ll G$, $G \gg F$ and $F =O(G)$ are all equivalent to the statement that the inequality $|F| \leq c|G|$
holds with some constant $c > 0$. Throughout the paper, any implied
constants in symbols O, $\ll$ and $\gg$ are absolute unless specified otherwise.
\smallskip

For positive integer $n$ we denote by $\cI_n$ the set of \emph{monic irreducible}  polynomials of degree~$n$. We also let $\cM=\bigcup_n \cM_n$ and $\cI=\bigcup_n \cI_n$ to be the sets of \emph{all monic} and \emph{all monic irreducible polynomials} respectively.

Let $\K_{\infty}$ be the set of formal power series 
$$
\Fq((1/T))=\left\{\xi=\sum_{i\leq k}x_iT^i, k\in \ZZ, x_i\in\Fq \right\},
$$ 
which is the completion of $\Fq[T]$ with respect to the usual norm
$$
|A|=q^{\deg A}
$$
for polynomial $A$ (with the convention $|0|=0$). We extend this norm to $\K_{\infty}$ by
$$
|\xi|=q^k
$$
where $k$ is the largest index so that $x_k\neq 0$.

Moreover, for $\xi\in \K_{\infty}$ we define
$$
\| \xi\|=\min_{A\in \Fq[T] }|\xi-A|.
$$

Write
$$
\psi(x)=\exp\left(\frac{2\pi i }{p} \mathrm{tr}_{\Fq}(x) \right), \quad x\in\F_q,
$$
where $\mathrm{tr}_{\Fq}$ is the (absolute) trace of $\Fq$ and $p$ is the characteristic of $\Fq$, and put
$$
\e(\xi)=\psi(x_{-1}), \quad \xi=\sum_{i\leq k}x_iT^i \in  \K_{\infty} .
$$

We define $\mathbf{T}$ by $\{\xi\in \K_{\infty}: \|\xi\|<1 \}$, and fix an additive Haar measure, normalized so that $\int_{\mathbf{T}} \mathrm{d}\xi=1$. Then we have for a polynomial $A\in\Fq[T]$,
\begin{equation}\label{eq:ortogonal}
 \int_{\mathbf{T}} \e(A\xi) \mathrm{d}\xi=
 \left\{
 \begin{array}{cl}
    1  & \text{if } A=0, \\
    0  &  \text{if } A\neq 0,
 \end{array}
 \right.
\end{equation}
see \cite[Theorem~3.5]{Hayes:Goldbach}.

\section{Outline of the proof}

Write
$$
f_\cA(\xi)=\sum_{ A\in\cA}\e(\alpha A\xi)\mand
f_\cB(\xi)=\sum_{B\in\cB}\e(\beta B\xi).
$$
Then
$$
f_\cA(\xi)f_\cB(\xi)=\sum_{A\in\cA, B\in\cB}\e((\alpha A+\beta B)\xi)=\sum_{G\in \cM_n} u_G\e(G\xi ),
$$
where
\begin{equation}\label{eq:u_def}
u_G=\sum_{\substack{\alpha A+\beta B=G\\ A\in\cA, B\in\cB}}1
\end{equation}
counts the number of representations of $G$ as a sum of $G=\alpha A+\beta B$, where $A\in\cA$ and  $B\in\cB$.

Put
\begin{equation*}%\label{eq:j_def}
j=\left\lceil \log_q \rho + \log_q\log_q \log_q \rho  +\frac{\eta}{\log q}\right\rceil
\end{equation*}
for some $\eta>0$ to be fixed later. 

Write
$$
\cS=\{ C P: C\in \cM_{j}, P\in\cI_{n-j} \}.
$$
and define
$$
f_\cS(\xi)=\sum_{S\in\cS}\e(S\xi)=\sum_{G\in\cM_n}v_G\e(G\xi),
$$
where
$$
v_G
=
\left\{
\begin{array}{cl}
1     & \text{if } G=CP \text{ for some }  C\in \cM_{j}, P\in\cI_{n-j} , \\
0     & \text{otherwise}.
\end{array}
\right.
$$

Finally, define the following integral
$$
I=\int_{\mathbf{T}}f_\cA(\xi)f_\cB(\xi)f_\cS(-\xi)\mathrm{d}\xi.
$$
By \eqref{eq:ortogonal}, we have
\begin{align*}
I&=\int_{\mathbf{T}}\sum_{A\in\cA}\sum_{B\in\cB}\sum_{S\in\cS} \e((\alpha A+\beta B-S)\xi) \mathrm{d}\xi\\
&=\int_{\mathbf{T}}\sum_{G\in\cM_n } \sum_{H\in\cM_n} u_G v_H \e((G-H)\xi) \mathrm{d}\xi\\
&=\sum_{G\in\cM_n}  u_G v_G.
\end{align*}

Note that $u_G=1$ implies that $G$ has the form $G=\alpha A+\beta B$ for some $A\in\cA$, $B\in\cB$ and $v_G=1$ implies 
that $G$ has an irreducible divisor of degree $j$. Thus, in order to prove Theorem~\ref{thm:main}, it is enough to show 
\begin{equation}\label{eq:goal}
I\geq c_1 \frac{\#\cA\#\cB}{n}
\end{equation}
which is shown in Section~\ref{sec:proof}.

\section{Preliminaries}

\subsection{Diophantine approximation for polynomials}
The following result \cite[Theorem~4.3]{Hayes:Goldbach} is the polynomial analogue of the Dirichlet's theorem on Diophantine approximation of real numbers.

\begin{lemma}\label{lemma:dio}
For $\xi\in\mathbf{T}$ and positive integer $n$, there exist unique $G,H\in\Fq[T]$, $\gcd(G,H)=1$ such that $H$ is monic, $|G|<|H|<q^{n/2}$ and
$$
\left|\xi -\frac{G}{H} \right|<\frac{1}{|H|q^{n/2}}.
$$
\end{lemma}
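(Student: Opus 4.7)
The plan is to prove this as the function-field analogue of Dirichlet's Diophantine approximation theorem, with existence by a pigeonhole (dimension-count) argument and uniqueness by the strong triangle inequality.

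For existence, I would introduce the $\Fq$-linear map
\[
\phi\colon V \longrightarrow \mathbf{T}/W,\qquad H \mapsto \bigl(H\xi - \lfloor H\xi\rfloor\bigr) + W,
\]
where $V = \{H \in \Fq[T] : |H| < q^{n/2}\}$, $W = \{\eta \in \mathbf{T} : |\eta| < q^{-n/2}\}$, and $\lfloor \cdot \rfloor$ denotes the polynomial part in $\K_\infty$. A careful count, using that $|\cdot|$ takes only integer powers of $q$, shows $\dim_{\Fq} V > \dim_{\Fq}(\mathbf{T}/W)$, so $\phi$ has a non-trivial kernel. A non-zero $H$ in this kernel, paired with $G := \lfloor H\xi\rfloor$, then satisfies $|H| < q^{n/2}$ and $|H\xi - G| < q^{-n/2}$. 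Scaling $H$ (and $G$) by the inverse of the leading coefficient of $H$ makes $H$ monic, and replacing $(G,H)$ by $(G/\gcd(G,H),\, H/\gcd(G,H))$ enforces coprimality; neither step worsens the two inequalities. Finally, since $|\xi|<1$ and $|\xi - G/H|<1$, the ultrametric inequality applied to $G/H = \xi - (\xi - G/H)$ forces $|G/H|<1$, i.e.\ $|G|<|H|$.

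For uniqueness, suppose $(G,H)\neq(G',H')$ are two distinct reduced solutions, and, after relabeling, assume $|H|\leq|H'|$. Then $GH'-G'H$ is a non-zero polynomial, so
\[
\left|\frac{G}{H}-\frac{G'}{H'}\right| \;=\; \frac{|GH'-G'H|}{|HH'|} \;\geq\; \frac{1}{|HH'|}.
\]
On the other hand, by the ultrametric triangle inequality,
\[
\left|\frac{G}{H}-\frac{G'}{H'}\right| \;\leq\; \max\!\left(\left|\xi-\frac{G}{H}\right|,\,\left|\xi-\frac{G'}{H'}\right|\right) \;<\; \frac{1}{|H|\,q^{n/2}}.
\]
Combining these forces $|H'| > q^{n/2}$, contradicting $|H'| < q^{n/2}$.

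The principal technical point is the pigeonhole dimension count: the strict inequalities $|H|<q^{n/2}$ and $|H\xi-G|<q^{-n/2}$ make the $\Fq$-dimensions of $V$ and $\mathbf{T}/W$ nearly match, and one must exploit the non-Archimedean discreteness of $|\cdot|$ (handling the parities of $n$ separately, if needed) to obtain the strict excess giving a non-trivial kernel. The rest of the argument is routine linear algebra and ultrametric bookkeeping.
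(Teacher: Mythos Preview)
The paper does not give its own proof of this lemma; it simply quotes \cite[Theorem~4.3]{Hayes:Goldbach}. Your pigeonhole argument for existence and your ultrametric argument for uniqueness are precisely the standard proof, and the uniqueness half is complete and correct as written.

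There is, however, a real issue in the existence half that your parenthetical about ``handling the parities of $n$ separately'' does not resolve. For even $n$ one has $\dim_{\Fq}V=\dim_{\Fq}(\mathbf{T}/W)=n/2$ exactly, so there is no strict excess and the pigeonhole can fail. In fact the lemma \emph{as stated} is false for even $n$: take $n=2$ and $\xi=T^{-1}\in\mathbf{T}$; then $|H|<q$ forces the monic $H$ to be $1$, $|G|<1$ forces $G=0$, and the required bound becomes $|T^{-1}|<q^{-1}$, which fails. The correct formulation (which is what Hayes proves, and what the paper actually \emph{uses} in the proof of Lemma~\ref{lemma:main}, where it writes $|V_C|\le Q$) replaces the strict bound on $|H|$ by a non-strict one, say $|H|\le q^{\lfloor n/2\rfloor}$. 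With that change your dimension count gives $\dim_{\Fq}V=\lfloor n/2\rfloor+1>\lfloor n/2\rfloor\ge\dim_{\Fq}(\mathbf{T}/W)$, the kernel is non-trivial, and the rest of your argument (monic normalization, dividing out $\gcd$, and the ultrametric bound $|G|<|H|$) goes through unchanged. So your plan is sound; it is the stated inequality, not your method, that needs the adjustment.
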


\subsection{Equivalence relation of polynomials}
For a monic polynomial 
$$
A=T^n+a_{n-1}T^{n-1}+\dots+a_0\in\Fq[T],
$$
we call $a_{n-1}, \dots, a_{n-s}$ the first $s$ coefficients of $A$ with the convention $a_{n-s}=0$ for $s>n$.  

For a non-negative integer $\ell$  and a  polynomial $H$ write
$$
A\equiv B \mod \cR_{\ell,H}
$$
if $A \equiv B \pmod H$ and the first $\ell$ coefficients of $A$ and $B$ are the same. Clearly, $\cR_{\ell,H}$ is an equivalence relation. We also write $\cR_{\ell}=\cR_{\ell,1}$ and $\cR_{H}=\cR_{0,H}$.

The polynomial $A$ is invertible modulo $\cR_{\ell,H}$ if $\gcd(A,H)=1$. These invertible polynomials form a group, denoted by $G_{\ell, H}=\left(\cM/\cR_{\ell,H}\right)^\times$. Write $G_{\ell}=\left(\cM/\cR_{\ell}\right)^\times$ and $G_{H}=\left(\cM/\cR_{H}\right)^\times$. Then we have 
\begin{equation}\label{eq:CRT}
G_{\ell, H} \cong G_{\ell} \times G_{H},
\end{equation}
see \cite[Theorem~8.6]{Hayes:Distribution} and \cite[Lemma~1.1]{Hsu:Distribution}. We also have $\# G_{\ell}=q^\ell$ and
$$
\#G_{H}=\#(\Fq[T]/(H))=\Phi(H).
$$
We have the following lower bound on $\Phi(H)$, see \cite[Lemma~2.3]{Ha:Irreducible_polynomials},
\begin{equation}\label{eq:phi}
\Phi(H) \gg 
\frac{|H|}{\log_q (\deg H) +1 } \quad \text{for } H\neq 0.
\end{equation}

\subsection{Distribution of irreducible polynomials in residue classes}

We have the following result on distribution of irreducible polynomials in residue classes modulo $\cR_{\ell,H}$, see \cite[Corollary~2.5]{Hsu:Distribution}.

\begin{lemma}\label{lemma:combined_PNT-2}

Let $H\in \Fq[T]$ and $\ell$ be a non-negative integer. For $B\in\Fq[T]$ with $\gcd(B,H)=1$, we have
$$
\#\left\{P\in \cI_n: P \equiv B \mod \cR_{\ell, H}\right\}
 = \frac{1}{q^\ell \Phi(H)}\frac{q^n}{n} + O\left(\frac{(\ell + \deg H) q^{n/2}}{n}\frac{|H|}{\Phi(H)}\right).
$$
\end{lemma}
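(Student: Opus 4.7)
The plan is to expand the indicator function of the residue class as a sum over characters of the finite abelian group $G_{\ell,H}$, and then bound the resulting character sums over monic irreducibles using the polynomial Prime Number Theorem for the trivial character and the Riemann Hypothesis for Dirichlet $L$-functions over $\Fq[T]$ (Weil's theorem) for the non-trivial ones.

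Concretely, by \eqref{eq:CRT} the group $G_{\ell,H}$ has order $q^\ell\Phi(H)$, and orthogonality of characters gives
\begin{equation*}
\#\{P\in\cI_n: P\equiv B \mod \cR_{\ell,H}\}
=\frac{1}{q^\ell\Phi(H)}\sum_{\chi}\overline{\chi(B)}\sum_{\substack{P\in\cI_n\\ \gcd(P,H)=1}}\chi(P),
\end{equation*}
where $\chi$ runs over the characters of $G_{\ell,H}$. The trivial character $\chi_0$ contributes $\frac{1}{q^\ell\Phi(H)}\bigl(\#\cI_n+O(\deg H)\bigr)$, and combined with the polynomial PNT $\#\cI_n = q^n/n + O(q^{n/2}/n)$ this isolates exactly the main term stated in the lemma.

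For each non-trivial character $\chi$ of $G_{\ell,H}$, the Dirichlet $L$-function $L(s,\chi)=\sum_{A\in\cM}\chi(A)|A|^{-s}$ is, by Hayes's theory of $L$-functions twisted by short-interval characters combined with Weil's Riemann Hypothesis for curves over $\Fq$, a polynomial in $u=q^{-s}$ of degree at most $\ell+\deg H-1$, all of whose inverse roots have absolute value $\sqrt{q}$. The standard logarithmic-derivative/explicit-formula argument then yields
\begin{equation*}
\left|\sum_{P\in\cI_n}\chi(P)\right|\ll \frac{(\ell+\deg H)\,q^{n/2}}{n}
\end{equation*}
uniformly in non-trivial $\chi$. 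Summing over the $q^\ell\Phi(H)-1$ non-trivial characters and dividing by $q^\ell\Phi(H)$ produces the claimed error term, the extra factor $|H|/\Phi(H)$ being absorbed from the careful handling of imprimitive characters (whose induced primitive characters have strictly smaller conductor, so that the degree of $L(s,\chi)$ must be compared against the nominal bound coming from $H$ itself).

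The main obstacle is the uniform control of $L(s,\chi)$ across both pieces of the decomposition $G_{\ell,H}\cong G_\ell\times G_H$: Hayes's refinement of the Riemann Hypothesis is needed to cover characters detecting the first-$\ell$-coefficients equivalence $\cR_\ell$, and imprimitive characters must be tracked carefully to recover the precise shape of the error term in the lemma, using \eqref{eq:phi} to bound $|H|/\Phi(H)$. Once the character-sum bound is in place, the rest of the argument is routine bookkeeping.
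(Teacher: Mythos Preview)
The paper does not give its own proof here; it simply cites \cite[Corollary~2.5]{Hsu:Distribution}. Your orthogonality-of-characters argument, combined with the Riemann Hypothesis for the associated $L$-functions over $\Fq(T)$, is exactly the route taken by Hsu (and originally by Hayes), so the approaches coincide.

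One point deserves correction: your explanation of the factor $|H|/\Phi(H)$ in the error term is misleading. The uniform bound
\[
\Bigl|\sum_{P\in\cI_n}\chi(P)\Bigr|\ll \frac{(\ell+\deg H)\,q^{n/2}}{n}
\]
already covers \emph{every} non-trivial $\chi$, primitive or not: for imprimitive $\chi$ the extra Euler factors contribute additional inverse roots of absolute value $1\le\sqrt{q}$, and the total degree of $L(u,\chi)$ is still at most $\ell+\deg H-1$. Averaging over all non-trivial characters therefore gives an error $\ll(\ell+\deg H)q^{n/2}/n$ with no $|H|/\Phi(H)$ present; since $|H|/\Phi(H)\ge1$, the lemma as stated is simply a harmless weakening, not something that has to be ``absorbed'' from imprimitive characters. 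Likewise, \eqref{eq:phi} plays no role in proving the lemma --- it is used elsewhere in the paper only when the lemma is applied.
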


\subsection{Exponential sums over polynomials}

For positive integers $k$, we have
\begin{equation}\label{eq:sum}
\sum_{A\in\cM_k}\e(A\xi)=
\left\{
\begin{array}{ll}
     q^k \e(T^k\xi) & \text{if } \|\xi\|<q^{-k},  \\
     0 & \text{otherwise,} 
\end{array}
\right.
\end{equation}
see \cite[Theorem~3.7]{Hayes:Goldbach}.

\begin{lemma}
Let 
\begin{equation}\label{eq:Va_00}
\xi= \frac{A}{B}+ \gamma \quad \text{with} \quad |\gamma| \leq \frac{1}{|B|^2} \quad \text{and} \quad (A,B)=1.
\end{equation}
Then we have
\begin{equation}\label{eq:Va_03}
    \sum_{G\in\cM_\ell}\left|\sum_{H \in \cM_k} \e(\xi GH) \right|\ll \frac{q^{k+\ell}}{|B|}  + q^\ell +|B|.
\end{equation}

\end{lemma}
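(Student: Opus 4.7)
The plan is to apply \eqref{eq:sum} to collapse the inner sum into an indicator of a Diophantine condition on $\xi G$, and then to bound the resulting count using the rational approximation \eqref{eq:Va_00}.

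For each fixed $G\in\cM_\ell$, applying \eqref{eq:sum} with $\xi$ replaced by $\xi G$ shows that
$$
\Bigl|\sum_{H\in\cM_k}\e(\xi GH)\Bigr|=
\begin{cases}q^k & \text{if }\|\xi G\|<q^{-k},\\ 0 & \text{otherwise,}\end{cases}
$$
so the left-hand side of \eqref{eq:Va_03} equals $q^kN$, where $N:=\#\{G\in\cM_\ell:\|\xi G\|<q^{-k}\}$, and it suffices to prove $N\ll q^\ell/|B|+q^{\ell-k}+|B|/q^k$. By the ultrametric property, if $G,G'\in\cM_\ell$ both satisfy this bound, then $D:=G-G'$ has $\deg D<\ell$ and $\|\xi D\|<q^{-k}$; consequently $N\leq \#\cD$, where
$$
\cD:=\{D\in\Fq[T]:\deg D<\ell,\ \|\xi D\|<q^{-k}\}
$$
is an $\Fq$-linear subspace of the space of polynomials of degree less than $\ell$.

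For the bound on $\#\cD$ I would exploit \eqref{eq:Va_00}. Absorbing the polynomial part of $\xi$ into a shift (which leaves \eqref{eq:Va_03} and $|\gamma|$ invariant), one may assume $\xi\in\mathbf{T}$ and $\deg A<b:=\deg B$. For $D\in\cD$, write uniquely $D=MB+D_0$ with $\deg D_0<b$ (which forces $M=0$ whenever $\ell\leq b$), and set $R:=AD_0\bmod B$; a direct computation then shows
$$
\xi D\equiv \frac{R}{B}+\gamma MB+\gamma D_0\pmod{\Fq[T]}.
$$
Because $\gcd(A,B)=1$, the map $D_0\mapsto R$ is a bijection on $\{D_0:\deg D_0<b\}$, and the three summands live on distinct scales: $|\gamma D_0|<1/|B|$, $|\gamma MB|\leq|M|/|B|$, and $|R/B|\in\{0,q^{-b},\dots,q^{-1}\}$. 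A case analysis on which of these dominates the norm, combined with the bijection $D_0\leftrightarrow R$, constrains the residue $D_0\bmod B$ (through $\deg R<b-k$) and the quotient $M$ (through the analogous constraint on $\gamma MB$), producing respectively the terms $|B|/q^k$ and $q^\ell/|B|+q^{\ell-k}$.

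The main obstacle I anticipate is keeping the case analysis clean so that the three competing scales combine into the desired additive bound across the boundary regimes $b\leq k$, $k<b\leq\ell$, and $b>\ell$, without introducing spurious cross-terms such as $q^{\ell+b-2k}$. The argument is essentially the $\Fq[T]$-analogue of Vinogradov's classical estimate for $\sum_n\min(M,1/\|\alpha n\|)$; the polynomial-field version is cleaner because the inner sum takes only the two values $q^k$ and $0$.
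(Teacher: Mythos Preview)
Your approach is essentially the same as the paper's: both collapse the inner sum via \eqref{eq:sum} to a counting problem, decompose the outer variable modulo $B$ (your $D=MB+D_0$ is the paper's $G=CB+D$), and then use the ultrametric inequality together with the bijection $D_0\mapsto AD_0\bmod B$ and the bound $|\gamma D_0|<1/|B|$ to control the count. The paper organises the case analysis by first fixing the quotient $C$ and treating $\gamma CB$ as a constant shift (splitting only into $\deg B\le\ell$ versus $\deg B>\ell$), which sidesteps the three-scale interaction you flag as the main obstacle and avoids any spurious cross-terms; your detour through the difference subspace $\cD$ is harmless but not needed.
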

\begin{proof}
We have by \eqref{eq:sum}, that
$$
\left|\sum_{H \in\cM_k} \e(\xi GH) \right|=
\left\{
\begin{array}{cl}
     q^k & \text{if } \|\xi G\|<q^{-k} ,\\
     0 & \text{otherwise}. 
\end{array}
\right.
$$
First assume, that $\deg B\leq \ell$. Then write $G=CB+D$ with $\deg D<\deg B$. Then we have 
$$
\left\|\xi G\right\|=\left\|\frac{AD}{B} + \gamma D + \gamma CB \right\|.
$$

If $D\neq 0$, we have
\begin{equation}\label{eq:D_small}
\left\|\frac{AD}{B}\right\|\geq \frac{1}{|B|} \mand \|\gamma D\|< \frac{1}{|B|}
\end{equation}
by \eqref{eq:Va_00}
so
\begin{align*}
&\#\left\{1\leq |D|<|B|: \left\|\frac{AD}{B}+\gamma D + \gamma CB\right\|<\frac{1}{q^k} \right\}\\
\leq   &
\#\left\{1\leq |D|<|B|: \left\|\frac{AD}{B}+\gamma D + \gamma CB\right\|<\max\left\{\frac{1}{q^k}, \frac{1}{|B|}\right\} \right\}\\
\leq   &
\#\left\{1\leq |D|<|B|: \left\|\frac{AD}{B} + \gamma CB\right\|<\max\left\{\frac{1}{q^k}, \frac{1}{|B|}\right\} \right\}\\
\leq   &
\max\left\{1,\frac{|B|}{q^k}\right\}.
\end{align*}

Then we have
\begin{align*}
&     \sum_{|C|=q^\ell/|B|}q^k\sum_{\deg D<\deg B} \mathbf{1} \left\{ \left\|\frac{AD}{B} + \gamma D + \gamma CB \right\|< \frac{1}{q^k}\right\}
%\\&
\ll  \sum_{|C|=q^\ell /|B|}q^k \left(1 + \frac{|B|}{q^k} \right)
%\\&
\leq  \frac{q^{\ell + k}}{|B|} + q^\ell. 
\end{align*}

For $\deg B>\ell$, we have $C=0$ and in the same way one gets by \eqref{eq:D_small} that
\begin{align*}
&\#\left\{ \deg D=\ell: \left\|\frac{AD}{B}+\gamma D \right\|<\frac{1}{q^k} \right\}\\
=&\#\left\{ \deg D=\ell: \left\|\frac{AD}{B} \right\|<\frac{1}{q^k} \right\}\\
\leq
&
\#\left\{1\leq |D|<|B|: \left\|\frac{AD}{B}\right\|<\frac{1}{q^k} \right\}
\leq \frac{|B|}{q^k}
\end{align*}
which yields
$$
q^k\sum_{ D\in \cM_\ell} \mathbf{1} \left\{ \left\|\frac{AD}{B} + \gamma D \right\|< \frac{1}{q^k}\right\} \leq |B| .
$$
\end{proof}

\subsection{Vaughan's estimate}
Write 
$$
\mu (F)=
\left\{
\begin{array}{cl}
(-1)^k & \text{if $F=P_1\dots P_k$ for some distinct $P_1,\dots, P_k\in\cI$},\\
0 & \text{otherwise}
\end{array}
\right.
$$
and
$$
\Lambda (F)=
\left\{
\begin{array}{cl}
\deg P & \text{if $F=P^k$ for some $P\in \cI$ and integer $k\geq 1$},\\
0 & \text{otherwise}.
\end{array}
\right.
$$
Clearly, we have
\begin{equation}\label{eq:Lambda=deg}
\sum_{G\mid F}\Lambda(G)=\deg F 
\end{equation}
and the analogue of the prime number theorem for polynomials states
\begin{equation}\label{eq:PNT}
    \sum_{F\in \cM_n}\Lambda(F)=q^n.
\end{equation}

We need the following analogue of Vaughan's identity whose proof is identical to \cite[Proposition~13.4]{IK}.

\begin{lemma}\label{lemma:Vaughan1}
Let $y,z> 1$. For $\deg F>z$,
\begin{align*}
    \Lambda(F) = &\sum_{\substack{G\mid F\\ \deg G\leq y}}\mu(G) \deg(F/G)- \mathop{\sum \sum}_{\substack{G H \mid F \\ \deg G\leq y, \deg H\leq z}} \mu (G) \Lambda(H)
    %\\&
    +
    \mathop{\sum \sum}_{\substack{G H \mid F \\ \deg G> y, \deg H> z}} \mu (G) \Lambda(H).
\end{align*}
\end{lemma}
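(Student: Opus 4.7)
The plan is to carry over the classical proof of Vaughan's identity (as in IK, Proposition~13.4) essentially verbatim, since the argument uses only the multiplicative structure of the monoid $\cM$ of monic polynomials together with Möbius inversion. I will work with Dirichlet convolution: for $f,g:\cM\to\mathbb{C}$ set $(f*g)(F)=\sum_{GH=F}f(G)g(H)$. Let $\mathbf{1}$ denote the constant function $1$ on $\cM$, and $\delta$ the convolution identity ($\delta(F)=1$ if $F=1$, and $0$ otherwise). The whole proof rests on two identities: $\mu*\mathbf{1}=\delta$ (Möbius inversion in $\Fq[T]$) and $\Lambda*\mathbf{1}=\deg$, which is precisely~\eqref{eq:Lambda=deg} and whose Möbius inverse gives $\Lambda=\mu*\deg$.

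Introduce the truncations $\mu_{\leq y}(G)=\mu(G)$ if $\deg G\leq y$ and $0$ otherwise, with complementary piece $\mu_{>y}=\mu-\mu_{\leq y}$, and analogously $\Lambda_{\leq z},\Lambda_{>z}$. Starting from $\Lambda=\mu*\deg=\mu_{\leq y}*\deg+\mu_{>y}*\deg$, I rewrite the tail piece by inserting $\deg=\Lambda*\mathbf{1}=\Lambda_{\leq z}*\mathbf{1}+\Lambda_{>z}*\mathbf{1}$ and then substituting $\mu_{>y}=\mu-\mu_{\leq y}$ in the low-$\Lambda$ part. Using $\mu*\mathbf{1}=\delta$, this collapses to
$$
\mu_{>y}*\Lambda_{\leq z}*\mathbf{1}=\Lambda_{\leq z}-\mu_{\leq y}*\Lambda_{\leq z}*\mathbf{1},
$$
and after combining the pieces I obtain the functional identity
$$
\Lambda=\mu_{\leq y}*\deg+\Lambda_{\leq z}-\mu_{\leq y}*\Lambda_{\leq z}*\mathbf{1}+\mu_{>y}*\Lambda_{>z}*\mathbf{1}
$$
as equality of functions on $\cM$.

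Finally, I evaluate at $F\in\cM$ with $\deg F>z$. Then $\Lambda_{\leq z}(F)=0$, since $\Lambda$ is supported on prime powers $F=P^k$, for which $\deg F\leq z$ must fail. Unfolding each remaining convolution as a divisor sum produces the three terms in the statement: $\mu_{\leq y}*\deg(F)=\sum_{G\mid F,\,\deg G\leq y}\mu(G)\deg(F/G)$, and similarly each triple convolution becomes a sum over $GHK=F$ with the stated degree constraints, which (since $K=F/(GH)$ is determined) reduces to a double sum over $GH\mid F$ exactly as written. The argument is purely algebraic, and I anticipate no real obstacle beyond keeping track of the ranges of summation; the classical proof transports without modification because Möbius inversion and the formula $\sum_{G\mid F}\Lambda(G)=\deg F$ both hold verbatim in $\Fq[T]$.
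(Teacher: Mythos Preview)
Your argument is correct and is precisely the classical convolution proof of Vaughan's identity from \cite[Proposition~13.4]{IK}, which is exactly what the paper invokes (the paper gives no independent proof, merely noting that the argument is identical to the integer case). One cosmetic remark: your justification that $\Lambda_{\leq z}(F)=0$ for $\deg F>z$ via the prime-power support of $\Lambda$ is unnecessary---it holds immediately from the definition of the truncation $\Lambda_{\leq z}$.
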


\begin{lemma}\label{lemma:irred_sum}
Assume $\xi$ satisfies \eqref{eq:Va_00}. Then
\begin{equation*}
    \sum_{P\in\cI_n}\e(\xi P) \ll 
    n^{3/2}\left(\frac{q^n}{|B|^{1/2}} + q^{4n/5} + |B|^{1/2}q^{(n+1)/2}\right) .
\end{equation*}
\end{lemma}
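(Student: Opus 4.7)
The plan is to follow the classical Vinogradov--Vaughan strategy adapted to $\Fq[T]$. First I would pass from $\sum_{P\in\cI_n}\e(\xi P)$ to the von~Mangoldt--weighted sum $\Sigma(\xi):=\sum_{F\in\cM_n}\Lambda(F)\e(\xi F)$. Since $\Lambda(F)=n$ when $F\in\cI_n$ and $\Lambda(F)$ is supported on prime powers, the contribution of proper prime powers $P^k$, $k\geq 2$, is bounded by $n\cdot O(q^{n/2})$, which is already within the target bound. So it suffices to bound $|\Sigma(\xi)|$. I would then apply Lemma~\ref{lemma:Vaughan1} with the balanced choice $y=z=q^{2n/5}$, decomposing $\Sigma(\xi)=\Sigma_{1}+\Sigma_{2}+\Sigma_{3}$, where $\Sigma_{1},\Sigma_{2}$ are Type~I sums whose inner variable runs over $\cM_k$ with $k\geq n-y$, while $\Sigma_{3}$ is the Type~II bilinear sum with both ranges long.

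For the Type~I contributions, I would reorder the summation so that the short variable $G$ (of degree $\leq y$) is outer. The inner sum $\sum_{H\in\cM_k}\e(\xi GH)$ is evaluated exactly by \eqref{eq:sum}, and then $\sum_{G}|\cdots|$ is precisely the quantity estimated in \eqref{eq:Va_03}. After dyadic decomposition of $\deg G$ and $\deg H$, and absorbing the weights $\deg(F/G)$ and $\Lambda(H)$ into a polynomial loss in $n$, the Type~I pieces are bounded by
$$
n^{O(1)}\left(\frac{q^n}{|B|}+q^{n-y}+y\,|B|\right).
$$

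For the Type~II sum $\Sigma_{3}$, I would partition dyadically by $\ell=\deg G\in(y,n-z]$ and apply the Cauchy--Schwarz inequality in the outer $G$-variable. Expanding the resulting square on the $H$-side gives
$$
\sum_{H_1,H_2\in\cM_{n-\ell}}\Lambda(H_1)\Lambda(H_2)\sum_{G\in\cM_\ell}\e\bigl(\xi G(H_1-H_2)\bigr),
$$
and the innermost sum is again handled by \eqref{eq:sum}. Counting pairs $(H_1,H_2)$ with $\|\xi(H_1-H_2)\|<q^{-\ell}$ is then carried out via \eqref{eq:Va_03} applied with parameters adjusted to the shift $H_1-H_2$, which uses the Diophantine hypothesis \eqref{eq:Va_00} for $\xi$. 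This yields a Type~II contribution of order $n^{O(1)}\bigl(q^n/|B|^{1/2}+|B|^{1/2}q^{(n+1)/2}\bigr)$ per dyadic block.

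Combining the Type~I and Type~II bounds, the three terms $q^n/|B|^{1/2}$, $q^{4n/5}$, and $|B|^{1/2}q^{(n+1)/2}$ emerge as the outputs of the three regimes, with the balance point $y=q^{2n/5}$ producing the $q^{4n/5}$ term; the polylogarithmic factor $n^{3/2}$ comes from summing dyadically over $\ell$ together with the one factor of $n$ lost when converting from $\Lambda$-weighted sums back to the irreducible count. The main obstacle will be the Type~II bookkeeping: the Cauchy--Schwarz step must be set up so that the Diophantine approximation of $\xi$ propagates to one of $\xi(H_1-H_2)$ in a usable way, and so that \eqref{eq:Va_03} is applied in the correct range to produce $|B|^{1/2}$ in the denominator rather than $|B|$. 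This gain of a square root is precisely the point of applying Cauchy--Schwarz to the longer variable, and securing it cleanly — together with verifying that all the loose $\Lambda$-, $\mu$-, and $\deg$-weights only cost a factor $n^{O(1)}$ — is the technical heart of the argument.
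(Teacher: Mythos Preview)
Your plan is essentially the paper's own proof: pass to the $\Lambda$-weighted sum, apply Lemma~\ref{lemma:Vaughan1}, bound the two Type~I pieces via \eqref{eq:Va_03}, handle the Type~II bilinear sum by Cauchy--Schwarz in $G$ followed by \eqref{eq:Va_03} applied to the differenced variable, and balance at $y=z=2n/5$. Two small slips to fix when you write it up: in Lemma~\ref{lemma:Vaughan1} the parameters $y,z$ are \emph{degree} thresholds, so the correct choice is $y=z=2n/5$ (not $q^{2n/5}$), and the second Type~I sum ranges over $\deg(GH)<y+z$, so its contribution is $q^{y+z}=q^{4n/5}$ rather than $q^{n-y}$ --- this, together with the $q^{n-y/2}$ term from Type~II, is where the $q^{4n/5}$ actually comes from.
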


\begin{proof}
We can assume, that $|B|<q^n$ otherwise the bound is trivial.

It follows from \eqref{eq:PNT} that
$$
\frac{1}{n}\sum_{\substack{k\mid n \\ k<n}}\sum_{F\in\cM_k}\Lambda(F)\leq q^{n/2}
$$
thus
\begin{equation}\label{eq:Vau_primepower}
     \sum_{P\in \cI_n}\e(\xi P)= \frac{1}{n}\sum_{P\in\cI_n}\Lambda(P)\e(\xi P)=
     \frac{1}{n}\sum_{F\in\cM_n}\Lambda(F)\e(\xi F)+O\left( q^{n/2}\right).
\end{equation}

By Lemma~\ref{lemma:Vaughan1}, we have
\begin{equation}\label{eq:Vau_split}
\begin{split}
\sum_{F\in\cM_n}\Lambda(F)\e(\xi F)
& = \mathop{\sum \sum}_{\substack{ GH\in\cM_n \\ \deg G\leq y   }} \mu(G) \deg (H) \e(\xi GH)\\
& \quad - \mathop{\sum \sum \sum}_{\substack{ GHI\in\cM_n \\ \deg G\leq y, \deg H\leq z}} \mu (G) \Lambda(H) \e(\xi GHI)\\
& \quad  +
    \mathop{\sum \sum \sum}_{\substack{GHI \in\cM_n \\ \deg G> y, \deg H> z}} \mu (G) \Lambda(H)\e(\xi GHI).
\end{split}
\end{equation}
We estimate the first term by~\eqref{eq:Va_03},
\begin{equation}\label{eq:Vau_sum1}
\begin{split}
\left|\mathop{\sum \sum}_{\substack{ GH\in\cM_n \\ \deg G\leq y   }} \mu(G) \deg (H) \e(\xi GH)\right|&\leq \sum_{\ell \leq y}  (n- \ell) \sum_{G\in\cM_\ell}\left|\sum_{H\in\cM_{n-\ell}} \e(\xi GH)\right| \\
& \leq \sum_{\ell \leq y} (n-\ell) \left(\frac{q^n}{|B|}  + q^\ell + |B|\right)\\
& \ll y n \left(\frac{q^n}{|B|} + q^y + |B|\right).
\end{split}
\end{equation}
Similarly, using \eqref{eq:Lambda=deg} we estimate the second term by
\begin{equation}\label{eq:Vau_sum2}
\begin{split}
&\left|    \mathop{\sum \sum \sum}_{\substack{GHI\in\cM_n \\ \deg G\leq y, \deg H\leq z}} \mu (G) \Lambda(H) \e(\xi GHI)\right|\\
\leq& \sum_{\deg J<y+z} \left(\mathop{\sum \sum}_{\substack{GH = J \\ \deg G\leq y, \deg H\leq z}} \Lambda(H)\right) \left|\sum_{|I|=q^n/|J|} \e(\xi J I)  \right|  \\
 \leq& (y+z) \sum_{\ell < y+z}\sum_{J\in\cM_\ell}  \left|\sum_{I\in\cM_{n-\ell}} \e(\xi J I)  \right|\\
 \ll & (y+z)^2 \left(\frac{ q^n}{|B|}+ q^{y+z} + |B| \right).
\end{split}
\end{equation}

For the last term, write $J=HI$ and note that by \eqref{eq:Lambda=deg} we have
$$
c(J)=\sum_{\substack{H \mid J \\ \deg H>{z}}}\Lambda(H)\leq \deg J.
$$
Then
\begin{align*}
&\left|\mathop{\sum \sum \sum}_{\substack{GHI \in \cM_n \\ \deg G> y, \deg H> z}} \mu (G) \Lambda(H)\e(\xi GHI)\right|
\leq \sum_{y <\ell<n}  \sum_{G\in\cM_\ell} \left|\sum_{J\in\cM_{n-\ell}}  c(J)\e(\xi GJ)\right|.
\end{align*}
For fixed $\ell$, we have by the Cauchy inequality that
\begin{align*}
 \left(\sum_{G\in \cM_\ell} \left|\sum_{J\in\cM_{n-\ell}}  c(J)\e(\xi GJ)\right|\right)^2
 &\leq q^\ell \sum_{G\in \cM_\ell} \left|\sum_{J\in\cM_{n-\ell}}  c(J)\e(\xi GJ)\right|^2\\
&\leq q^\ell (n-\ell)^2 \mathop{\sum \sum}_{\deg J_1=\deg J_2={n-\ell}} \left|\sum_{G\in \cM_\ell}\e(\xi G(J_1-J_2)) \right|. 
\end{align*}
Write $I_1-I_2=aI$ with monic $I$ of degree at most $n-\ell-1$ and $a\in\Fq^*$. Then
\begin{align*}
&\mathop{\sum \sum}_{\deg J_1=\deg J_2={n-\ell}} \left|\sum_{G\in \cM_\ell}\e(\xi G(J_1-J_2)) \right|\\
&=\sum_{a\in\Fq^*} \sum_{J_1\in \cM_{n-\ell}} \sum_{k<n-\ell} \sum_{I\in \cM_{k}}\left|\sum_{G\in \cM_\ell}\e(a\xi GI) \right|\\
& \leq q^{n-\ell+1}(n-\ell)\left( \frac{q^{n-1}}{|B|} + q^{n-\ell-1} + |B|\right).
\end{align*}

%\begin{align*}
%\\
%&\ll q^n   (n-\ell)^3 \left(\frac{q^{\ell}}{|B|}+ q^{n-\ell}+|B|\right).
%\end{align*}
Thus, we get
\begin{equation}\label{eq:Vau_sum3}
\left|\mathop{\sum \sum \sum}_{\substack{GHI \in\cM_n \\ \deg G> y, \deg H> z}} \mu (G) \Lambda(H)\e(\xi GHI)\right|\ll q^{n/2}n^{5/2}\left(\frac{q^{n/2}}{\sqrt{|B|}} + \frac{q^{n/2}}{q^{y/2}} + \sqrt{q|B|} \right).
\end{equation}
Combining \eqref{eq:Vau_primepower} and \eqref{eq:Vau_split} with \eqref{eq:Vau_sum1}, \eqref{eq:Vau_sum2} and \eqref{eq:Vau_sum3} we get the result with the choice $y=z=2n/5$.
\end{proof}

\section{Further preliminaries}

The following result is the main tool to prove Theorem~\ref{thm:main}.

\begin{lemma}\label{lemma:main}
Assume, $1\leq j<n/7-4 \log_q n-3/7$ 
and 
\begin{equation}\label{eq:k}
k\leq n-j/2    .
\end{equation}
Then for  $|\xi|\geq q^{-k}$ we have
\begin{align*}
f_\cS(\xi)\ll&
\frac{\log j }{n}   q^{n-j}.
\end{align*}
\end{lemma}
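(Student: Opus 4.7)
My plan is to reduce the exponential sum $f_\cS(\xi)$ to a character sum controlled by the distribution of irreducibles in residue classes, and then to extract cancellation. First I would apply the orthogonality identity~\eqref{eq:sum} to evaluate the inner sum over $C\in\cM_j$, writing
$$f_\cS(\xi)=\sum_{C\in\cM_j}\sum_{P\in\cI_{n-j}}\e(\xi CP)=q^j\sum_{\substack{P\in\cI_{n-j}\\ \|\xi P\|<q^{-j}}}\e(T^j\xi P).$$
This converts the problem into a restricted exponential sum over irreducibles subject to a Diophantine side condition.

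Next I would apply Lemma~\ref{lemma:dio} to obtain a Dirichlet approximation $\xi=A/H+\gamma$, with $\gcd(A,H)=1$ and the parameter chosen so that $|\gamma P|$ is harmless; the hypothesis $|\xi|\ge q^{-k}$ combined with $k\le n-j/2$ is exactly what rules out the degenerate case $H=1$. Expanding $\xi P=AP/H+\gamma P$, the condition $\|\xi P\|<q^{-j}$ then decomposes into (i)~a congruence condition on $P\bmod H$ coming from $AP/H$ and (ii)~a condition on the first $\ell$ leading coefficients of $P$ coming from $\gamma P$, where $\ell$ is determined by $|\gamma P|$ relative to $q^{-j}$. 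Together these constraints identify the admissible $P$ with a union of classes of the combined equivalence relation $\cR_{\ell,H}$ from Section~2.2, which is precisely why the paper introduces $\cR_{\ell,H}$ rather than $\cR_H$.

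I would then apply Lemma~\ref{lemma:combined_PNT-2} to each admissible residue class, splitting
$$\#\{P\in\cI_{n-j}:P\equiv B\bmod\cR_{\ell,H}\}=\frac{1}{q^\ell\Phi(H)}\cdot\frac{q^{n-j}}{n-j}+O(\mathrm{err}),$$
which decomposes $f_\cS(\xi)$ into a main contribution of the shape
$$\frac{q^n}{q^\ell\Phi(H)(n-j)}\sum_{B}\e(T^j\xi B)$$
summed over the admissible classes $B$, plus an error. The main character sum is estimated using the orthogonality $G_{\ell,H}\cong G_\ell\times G_H$ from~\eqref{eq:CRT} together with M\"obius inversion to handle the coprimality $\gcd(B,H)=1$, which is what extracts genuine cancellation. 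The error is bounded with the help of the estimate $|H|/\Phi(H)\ll\log_q\deg H+1$ from~\eqref{eq:phi}.

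Finally I would optimize the Dirichlet parameter (and the corresponding $\ell$) to balance the main term against the error. The numeric conditions $j<n/7-4\log_q n-3/7$ and $k\le n-j/2$ are tuned precisely so that such a balance is possible and yields the stated $\ll(\log j/n)q^{n-j}$.

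The hardest step will be the middle one, i.e.\ the precise translation of the Diophantine condition $\|\xi P\|<q^{-j}$ into membership in a union of $\cR_{\ell,H}$-classes and the identification of the resulting character sum over the admissible $B$. Once that is carried out correctly, the bounds from Lemma~\ref{lemma:combined_PNT-2} and~\eqref{eq:phi} plus a careful choice of the Dirichlet parameter should yield the estimate in an essentially mechanical way.
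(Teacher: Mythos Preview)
Your outline captures the major-arc half of the argument---essentially the paper's cases $\cT_2$, $\cT_3$, $\cT_4$---where $\xi$ admits a rational approximation $\xi=U/V+\gamma$ with small denominator (in the paper, $|V|<q^{j+\omega}$ with $\omega=2j+5\log_q n$). There the condition $\|\xi P\|<q^{-j}$ and the phase $\e(T^j\xi P)$ are indeed determined by the class of $P$ modulo some $\cR_{\ell,V}$, Lemma~\ref{lemma:combined_PNT-2} applies, and the factor $\log j$ emerges from $|V|/\Phi(V)$ via~\eqref{eq:phi}, much as you describe.

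The gap is the minor arcs. Dirichlet's theorem always produces some $U/V$, but for generic $\xi$ the denominator $|V|$ is as large as the Dirichlet parameter itself, and then Lemma~\ref{lemma:combined_PNT-2} is useless: after summing over the admissible residue classes the accumulated error is of order $q^{(n-j)/2}|V|$ up to logarithmic factors, which already exceeds the target $q^{n-j}$ once $|V|>q^{(n-j)/2}$. No choice of the Dirichlet parameter eliminates this regime. The paper handles it by a completely separate tool: it applies Lemma~\ref{lemma:dio} not to $\xi$ but to $C\xi$ for each $C\in\cM_j$, defines $\cT_1$ to be those $\xi$ for which \emph{every} such approximation has denominator $|V_C|\ge q^\omega$, and on $\cT_1$ invokes the Vaughan-type estimate of Lemma~\ref{lemma:irred_sum} to bound $\sum_{P\in\cI_{n-j}}\e(C\xi P)$ directly from the largeness of $|V_C|$. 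This is the polynomial analogue of Vinogradov's exponential sum over primes, and it is indispensable. Your plan invokes only Lemmas~\ref{lemma:dio} and~\ref{lemma:combined_PNT-2} and never calls Lemma~\ref{lemma:irred_sum}; the sentence ``a careful choice of the Dirichlet parameter should yield the estimate in an essentially mechanical way'' is exactly where the argument breaks.
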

\begin{proof}

Put
\begin{equation}\label{eq:Q}
Q=  q^{n -3j/2},\qquad  \omega = 2j + 5 \log_q n  \mand \cT=\{\xi\in \mathbf{T}: |\xi|\geq q^{-k} \}.
\end{equation}

By Lemma~\ref{lemma:dio}, for all $\xi\in\cT$ and $C\in\cM_j$
there exist $U_C,V_C\in\Fq[T]$ with $\gcd(U_C,V_C)=1$, $|U_C|<|V_C|\leq Q$ and
\begin{equation}\label{eq:t1-1}
\left|C\xi-\frac{U_C}{V_C}\right|< \frac{1}{|V_C|Q}.
\end{equation}

Let $\cT_1$ denote the set of such $\xi \in \cT$ that
for all $C\in \cM_j$ 
\begin{equation}\label{eq:Vc_large}
|V_C|\geq q^{\omega}. 
\end{equation}

Define $\cT' = \cT\setminus \cT_1$, that is, the set of those $\xi\in\cT$ that there exist some polynomial $\bar{C}\in\cM_j$ such that there are no polynomials $U_{\bar{C}}, V_{\bar{C}}$ satisfying \eqref{eq:t1-1} and \eqref{eq:Vc_large} with $\bar{C}$ in place of $C$.

By Lemma~\ref{lemma:dio}, there exist $\bar{U}, \bar{V}\in\Fq[T]$ such that 
$$
\left|\bar{C}\xi -\frac{\bar{U}}{\bar{V}} \right|<\frac{1}{|\bar{V}|Q}, \quad \gcd(\bar{U},\bar{V})=1,  |\bar{V}|<Q.
$$
If $\xi\in\cT'$, we also have $|\bar{V}|<q^{\omega}$. Write $U/V=\bar{U}/\bar{C}\bar{V}$ with coprime polynomials $U,V$ and $\gamma=\xi-U/V$. Then we have
\begin{equation}\label{eq:V-gamma}
|V|\leq q^j |\bar{V}| <q^{\omega + j}\mand |\gamma|<\frac{1}{q^j|\bar{V}|Q}.
\end{equation}
Moreover, as $|\xi|\geq q^{-k}$, we also have $|V|>1$. Indeed, if $V=1$, then $\gamma=\xi$ contradicting to \eqref{eq:k}, \eqref{eq:V-gamma} and the definition of $Q$.

Define the following subsets of $\cT'$,
\begin{align*}
    \cT_2&=\{\xi\in \cT': 1<|V|\leq q^{j}, |\gamma|<q^{-n} \},\\
    \cT_3&=\{\xi\in \cT': q^{j} <|V|\leq q^{j+\omega }, |\gamma|<q^{-n} \},\\
    \cT_4&=\{\xi\in \cT': 1<|V|\leq q^{j+\omega}, |\gamma|\geq q^{-n} \}.
\end{align*}
We estimate $f_\cS$ separately in each subsets $\cT_1,\dots, \cT_4$.

For $\xi\in \cT_1$, we have by Lemma~\ref{lemma:irred_sum}, that
\begin{align*}
f_\cS(\xi)&\ll q^j n^{3/2} \left(q^{n-j-\omega/2} + q^{4(n-j)/5}+q^{\omega/2+(n-j+1)/2}\right)\\
&\ll n^{3/2} \left(q^{n-\omega/2}+q^{(4n+j)/5} +q^{(n+j+\omega+1)/2}\right) \ll \frac{q^{n-j}}{n-j}. 
\end{align*}

For $\xi\in\cT_2$, we have $\deg V\leq j < n-j$ thus for all $P\in\cI_{n-j}$
$$
\left\|\frac{PU}{V} \right\|\geq \frac{1}{|V|}\geq q^{-j}
$$
as both $U$ and $P$ are coprime to $V$. Moreover,
$$
\left\|P\gamma \right\|= |P||\gamma|< q^{-j}
$$
and thus
$\|P\xi\|\geq q^{-j}$. Then we have by \eqref{eq:sum}, that
$$
f_\cS(\xi)= \sum_{P\in\cI_{n-j}} \sum_{C\in\cM_j}   \e (CP\xi)=0.
$$

If $\xi\in\cT_3$, then $|\gamma|<q^{-n}$ and for $P\in\cI_{n-j}$ we have
$$
\|P\gamma\|<q^{-j} \mand (P\gamma)_{-j-1}=\gamma_{-n-1}.
$$
Whence, by \eqref{eq:sum},
\begin{equation}\label{eq:splitting-2}
    \sum_{C\in \cM_{j}}\e(CP\xi)=
    \left\{ 
    \begin{array}{cl}
         q^j \psi(\gamma_{-n-1})\e\left(T^jP\frac{U}{V}\right) & \text{if } \left\|P\frac{U}{V} \right\|<q^{-j} , \\
         0 & \text{otherwise.} 
    \end{array}
    \right.
\end{equation}

For $z\in\Fq$, let $\cW_z$ be the set of such residues $W\in\Fq[t]$ modulo $V$ that, 
$$
\|WU/V \|<q^{-j}, \quad  (WU/V)_{-j-1}=z \mand \gcd(W,V)=1.
$$
By Lemma~\ref{lemma:combined_PNT-2}, we have
$$
\#\{P\in \cI_{n-j}: P \bmod V \in \cW_z\}=\frac{\#\cW_z}{\Phi(V)}\frac{q^{n-j}}{n-j} + O\left(\#\cW_z \frac{\deg Vq^{(n-j)/2}}{n-j} \frac{|V|}{\Phi(V)}\right).
$$
Moreover, replacing $W$ by $cW$ for some $c\in\Fq^*$, we have $\#\cW_{z}=\#\cW_{cz}$. Also, we have the trivial bound
$$
\#\cW_{z}\leq \frac{|V|}{q^{j+1}}.
$$

Thus by \eqref{eq:splitting-2} we have
\begin{align*}
    \sum_{P\in\cI_{n-j}}\sum_{C\in\cM_{j}}\e(CP\xi)
    &=q^j \psi(\gamma_{-n-1}) \sum_{z\in\Fq} \psi(z) \#\{P\in \cI_{n-j}: P \bmod V \in \cW_z\}\\
    & =\frac{q^{n}}{n-j}  \psi(\gamma_{-n-1}) \sum_{z\in\Fq} \psi(z) \frac{\#\cW_z}{\Phi(V)}
    +
    O\left( \frac{ \deg V q^{(n-j)/2 }}{n-j} \frac{|V|^2}{\Phi(V)}\right)\\
    & \ll \frac{q^{n}}{n-j}   \frac{|\#\cW_1- \#\cW_0|}{\Phi(V)}
    +
    \frac{ (j+\omega) q^{(n-j)/2 }}{n-j} \frac{|V|^2}{\Phi(V)}\\
    & \leq \frac{q^{n}}{n-j}   \frac{|V|}{q^{j+1}\Phi(V)}
    +
     \frac{ (j+\omega)
    q^{(n-j)/2 }}{n-j}\frac{|V|^2}{\Phi(V)}\\
    & \ll \frac{q^{n-j-1}}{n-j} (1+\log 3j)  
    +
     \frac{ (j+\omega) \log_q(j+\omega) q^{(n-j)/2 +3j }}{n-j}\\
     &\ll \frac{\log j}{n-j}q^{n-j}
\end{align*}
by \eqref{eq:phi}.

Assume, that $\xi\in\cT_4$. As before, 
we have by \eqref{eq:sum}, that
\begin{equation}\label{eq:starting_sum}
\sum_{B\in\cM_j}   \e (BP\xi)=
\left\{
\begin{array}{cc}
    q^j \e (T^j P\xi) & \text{if } \|P\xi\|<q^{-j},  \\
    0 & \text{otherwise}. 
\end{array}
\right.
\end{equation}

We can write
$$
(T^jP\xi)_{-1}=(P\xi)_{-j-1}=\left(\frac{PU}{V}\right)_{-j-1}+\left(P\gamma\right)_{-j-1}.
$$
Put
$$
q^\ell=|\gamma|q^{n+1}.
$$
By \eqref{eq:Q}, \eqref{eq:V-gamma} and the definition of $\cT_4$ we have
\begin{equation}\label{eq:ell}
    1\leq \ell < \frac{3j}{2}-\deg V+1<n-j.
\end{equation}

Then, the value $(T^jP\xi)_{-1}$ depends only on the residue class of $P$ modulo $\cR_{\ell, V}$. Indeed, if $P\equiv A \mod \cR_{\ell,V}$ with $\deg A = n-j$, then 
$$
(PU/V)_{-j-1}=(AU/V)_{-j-1}
$$ 
as $P\equiv A \pmod V$ and 
$$
(P\gamma)_{-j-1}-(A\gamma)_{-j-1}=((P-A)\gamma)_{-j-1}=0
$$
as $|P-A|< q^{n-j-\ell}$.

By Lemma~\ref{lemma:combined_PNT-2}, we have for $A\in G_{\ell,V}$ that
$$
\#\left\{P\in \cI_{n-j}: P\equiv A \mod \cR_{\ell,V}\right\}
= \frac{1}{q^\ell\Phi(V)}\frac{q^{n-j}}{n-j} +O\left((\ell + \deg V)\frac{q^{(n-j)/2}}{n-j}\frac{|V|}{\Phi(V)}\right) .
$$

Put $s=n-j-\deg V-\ell-1$. By \eqref{eq:ell}, $s\geq 0$
and  write each $A\in G_{\ell, V}$ as
$$
A=A_0VT^{s}+A_1 \quad \text{with } A_0\in\cM_\ell, \ \deg A_1< \deg V,\ \gcd(A_1,V)=1.
$$

By \eqref{eq:CRT}, $A_0$ takes all polynomial in $\cM_\ell$ in the same frequency as $A$ runs in $G_{\ell,V}$, thus by \eqref{eq:sum} we have
\begin{equation}\label{eq:gammaC_0}
\sum_{A\in G_{\ell,V}}\e(T^jA\xi)=\sum_{\substack{A_1:\\ \gcd(A_1,V)=1}} \e(T^j A_1 U/V) \sum_{A_0\in \cM_\ell} \e(VT^{j+s}V A_0\gamma)=0
\end{equation}
as 
$$
\|T^{j+s}V\gamma\|=\|T^{n-\deg V-\ell-1}V\gamma\|=q^{-1}.
$$

Using \eqref{eq:starting_sum} and collecting the irreducible polynomials with respect to the reduced residue classes modulo $\cR_{\ell,V}$, we get by \eqref{eq:gammaC_0} that
\begin{align*}
f_\cS(\xi)&= \sum_{P\in\cI_{n-j}}\ \sum_{C\in\cI_j}   \e (CP\xi)\\
& = q^j
\sum_{A \in G_{\ell, V}} 
\sum_{\substack{P\equiv A \bmod \cR_{\ell,V}}}\e(T^jP\xi)\\
& = q^j \sum_{z\in \Fq} \psi(z)
\sum_{\substack{A\in G_{\ell, V}\\ (T^jA\xi)_{-1}=z  }}  \# \left\{P\in \cI_{n-j}: P\equiv A \mod \cR_{\ell ,V}\right\}
\\
& 
= q^j \sum_{z\in \Fq} \psi(z)\sum_{\substack{A\in G_{\ell, V}\\ (T^jA\xi)_{-1}=z  }}  \Bigg(\frac{1}{ \Phi(V)}\frac{q^{n-j-\ell}}{n-j}
+O\left((\ell +\deg V)\frac{q^{ (n-j)/2}}{n-j}\frac{|V|}{\Phi(V)}\right)\Bigg)\\
& 
= 
\frac{q^{n-\ell}}{n-j}\frac{1}{\Phi(V)} \sum_{A\in G_{\ell,V}}\e(T^jA\xi)
 +O\left((\ell +\deg V)|V|\frac{q^{ (n+j)/2+\ell}}{n-j}\right)
\\
& 
\ll
(\ell +\deg V)|V|\frac{q^{ (n+j)/2+\ell}}{n-j}
\\
&\ll n  \frac{q^{n+1}}{Q} \frac{q^{(n+j)/2}}{n-j}
\ll \frac{q^{n-j}}{n-j}.
\end{align*}

\end{proof}

\section{Proof of Theorem~\ref{thm:main}}\label{sec:proof}
Put
$$
k= n- j.
$$

Define
\begin{equation}
\begin{split}\label{eq:I_split}
I_1&=\int_{|\xi|<q^{-k}}f_\cA(\xi)f_\cB(\xi)f_\cS(-\xi)\mathrm{d}\xi,
 \\
I_2&=I-I_1=\int_{q^{-k}\leq |\xi|<1}f_\cA(\xi)f_\cB(\xi)f_\cS(-\xi)\mathrm{d}\xi.
\end{split}
\end{equation}

By \eqref{eq:AB_bound}, we have
\begin{equation*}%\label{eq:J_bound}
    1\leq j<n/7-4\log_q n-3/7
\end{equation*}
if $n$ is large enough in terms of $\varepsilon$. Then we have by Lemma~\ref{lemma:main}, that 
\begin{equation}\label{eq:I2}
\begin{split}
I_2
&\ll \frac{\log j}{n}q^{n-j}   \int_{q^{-k}\leq |\xi|<1} |f_\cA(\xi)||f_\cB(\xi)|\mathrm{d}\xi\\
 &   \ll \frac{\log j}{n}q^{n-j}  \int_{\mathbf{T}} |f_\cA(\xi)||f_\cB(\xi)|\mathrm{d}\xi\\
 &   \ll \frac{\log j}{n}q^{n-j}
 \left(
 \int_{\mathbf{T}} |f_\cA(\xi)|^2\mathrm{d}\xi\int_{\mathbf{T}} |f_\cB(\xi)|^2\mathrm{d}\xi\right)^{1/2}\\
  &   \ll \frac{\log j}{n}q^{n-j}
 \left( \# \cA \#\cB \right)^{1/2}.
\end{split}
\end{equation}

Write
$$
f_{\cU}(\xi)=\sum_{\deg B <k}\e(B\xi),
$$
where the sum is over all (not just monic) polynomials.
Then
\begin{equation*}
f_{\cU}(\xi)=\sum_{\tilde{B} \in \cM_k}\e((\tilde{B}-T^k )\xi)=\e(-T^k \xi) \sum_{\tilde{B} \in \cM_k}\e(\tilde{B}\xi) =
\left\{
\begin{array}{cl}
q^{k}      & \text{if }  |\xi|<q^{-k},\\
0     & \text{otherwise,}
\end{array}
\right.
\end{equation*}
by \eqref{eq:sum}.

Then we have
\begin{equation}\label{eq:I1}
\begin{split}
I_1&=\int_{|\xi|<q^{-k}}f_\cA(\xi)f_\cB(\xi)f_\cS(-\xi)\mathrm{d}\xi\\
&=\frac{1}{q^{k}}\int_{|\xi|<q^{-k}}f_\cA(\xi)f_\cB(\xi)f_\cS(-\xi)f_{\cU}(-\xi)\mathrm{d}\xi\\
&=\frac{1}{q^{k}}\int_{\mathbf{T}}f_\cA(\xi)f_\cB(\xi)f_\cS(-\xi)f_{\cU}(-\xi)\mathrm{d}\xi\\
&=\frac{1}{q^{k}}\int_{\mathbf{T}}\left(\sum_{G\in\cM_n}u_G\e(G\xi)\sum_{H\in\cM_n}w_H\e(-H\xi) \right)\mathrm{d}\xi\\
&=\frac{1}{q^{k}}\sum_{G\in\cM_n}u_G w_G,
\end{split}
\end{equation}
where $u_G$ is defined as \eqref{eq:u_def}
and
$$
w_H=\#\{(C,P): H\equiv CP \mod \cR_{n-k},  |C|=q^j, P\in\cI_{n-j}\}.
$$

As $j=n-k$, and $C$ is monic polynomial of degree $j$, it is invertible modulo $\cR_{n-k}$.

Then for a fixed $C$, the number of solution $P\in \cI_{n-j}$ to
$$
H\cdot C^{-1} \equiv P \mod \cR_{n-k}
$$
is
$$
\frac{1}{q^{n-k}}\frac{q^{n-j}}{n-j} + O\left(\frac{n-k}{n} q^{(n-j)/2} \right)\gg \frac{1}{q^{n-k}}\frac{q^{n-j}}{n-j}
$$
by Lemma~\ref{lemma:combined_PNT-2}.

Moreover, all  $q^j=q^{n-k}$ polynomial $C$ are incongruent modulo $\cR_{n-k}$, thus we have
$$
w_H\gg \frac{q^{n-j}}{n-j}.
$$
Then, it follows from \eqref{eq:I1}, that
\begin{equation}\label{eq:I1_fi}
    I_1\geq c_1 \frac{1}{n} \sum_{G\in\cM_n} u_G =c_1 \frac{\#\cA \#\cB }{n}
\end{equation}
for some absolute constant $c_1>0.$

We have
\begin{align*}
\frac{q^{j}}{\log j}\geq  \rho \frac{e^{\eta} \log_q\log_q \rho }{\log 2 +\log_q\log_q \rho  +\log_q\log q+ \log \eta} \geq \rho e^{\eta /2} 
\end{align*}
if $\eta$ is large enough. Thus combining \eqref{eq:I_split}, \eqref{eq:I2} and \eqref{eq:I1_fi} we get
\begin{align*}
I=I_1+I_2&\geq c_1\frac{\#\cA \#\cB}{n} -c_2\frac{\log j}{n}q^{n-j}
 \left( \# \cA \#\cB \right)^{1/2} \\
 &=c_1\frac{\#\cA \#\cB}{n}
 \left( 1 - c_3\log j \frac{q^{n-j}}{
 \left( \# \cA \#\cB \right)^{1/2}} \right)
 \\
 &\geq c_1\frac{\#\cA \#\cB}{n}
 \left( 1 -c_3 \rho \, \frac{\log j}{
 q^{j}}
 \right)\\
 &\geq c_1\frac{\#\cA \#\cB}{n}
 \left( 1 -c_3 e^{-\eta/2}
 \right)
 .
\end{align*}
If $\eta$ is large enough, we get
 \eqref{eq:goal} and hence Theorem~\ref{thm:main}.

\section*{Acknowledgment}
The author was supported by the Austrian Science Fund FWF grant  P~31762.
He wishes to thank to the anonymous referees for the careful reading and their very helpful reports.

\end{document}